\newcommand{\CC}{\mathbb{C}}
\newcommand{\NN}{\mathbb{N}}
\newcommand{\RR}{\mathbb{R}}
\DeclareMathOperator{\re}{Re}
\newtheorem{lem}{Lemma}
\newtheorem{sta}{Proposition}
\newtheorem{con}{Corollary}
\theoremstyle{remark}
\newtheorem*{rem*}{Remark}
\begin{document}

\title[Cauchy Problem for a Singularly Perturbed Equation]
 {A Method of the Study \\
 of the Cauchy Problem\\
 for a Singularly Perturbed Linear \\
 Inhomogeneous Differential Equation}

\author{Evgeny~E.~Bukzhalev}
\address{E.~E.~Bukzhalev: M.~V.~Lomonosov Moscow State University,
 Moscow, Russia}
\email{bukzhalev@mail.ru}

\author{Alexey~V.~Ovchinnikov}
\address{A.~V.~Ovchinnikov: M.~V.~Lomonosov Moscow State University,
 Moscow, Russia;
 All-Russian Institute for Scientific and Technical Information
 of the Russian Academy of Sciences, Moscow, Russia}
\email{ovchinnikov@viniti.ru}

\date{\today}

\begin{abstract}
We construct a sequence that converges to a solution of the Cauchy
problem for a singularly perturbed linear inhomogeneous
differential equation of an arbitrary order. This sequence is also
an asymptotic sequence in the following sense: the deviation (in
the norm of the space of continuous functions) of its $n$th
element from the solution of the problem is proportional to the
$(n+1)$th power of the parameter of perturbation. This sequence
can be used for justification of asymptotics obtained by the
method of boundary functions.
\end{abstract}

\keywords{Singular perturbations, Banach fixed-point theorem, method of asymptotic iterations, method of boundary functions, Routh--Hurwitz stability criterion}

\maketitle

\section{Introduction}

We propose an algorithm of construction of a sequence
\begin{equation*}
 \psi_n(x;\varepsilon)=(y^1_n(x;\varepsilon), \dots,
 y^m_n(x;\varepsilon))
\end{equation*}
that converges for each $\varepsilon \in (0,\varepsilon_0]$ with
respect to the norm of the space $C_m[0,X]$ of continuous
$m$-dimensional vector-valued functions of the argument $x \in
[0,X]$) to the function
\begin{equation*}
 \psi(x;\varepsilon)=\left(y(x;\varepsilon),
 \frac{d}{dx} y(x;\varepsilon), \dots,
 \frac{d^{m-1}}{{dx}^{m-1}} y(x;\varepsilon)\right),
\end{equation*}
where $y(x;\varepsilon)$ is a classical solution of the
problem~\eqref{de}--\eqref{ic}; for the value of $\varepsilon_0$
we obtain an explicit lower estimate. The construction and the
proof of convergence of the sequence $\psi_n(x;\varepsilon)$ are
based on the Banach fixed-point theorem for a contracting mapping
of a complete metric space (see~\cite{Khamsi_Kirk_2001_book}).
Since the contraction coefficient~$k$ of the mapping is a value of
order~$\varepsilon$ ($k<\varepsilon/\varepsilon_0$), so that the
deviation $y^i_n(x;\varepsilon)$ (with respect to the norm
of~$C[0,X]$) from $\dfrac{d^{i-1}}{{dx}^{i-1}} y(x;\varepsilon)$
is~$O(\varepsilon^{n+1})$ (for $0<\varepsilon \leq
\varepsilon_0$), we see that this result has also asymptotic
character.

Note that each successive element of the sequence
$\psi_n(x;\varepsilon)$ is the result of the action of a certain
operator on the previous element. Elements of such sequences are
usually called iterations and sequences themselves are said to be
iterative. In our case, iterations approach
to~$\psi(x;\varepsilon)$ (in the norm of~$C_m[0,X]$) sufficiently
rapidly; the rate of approach is asymptotically reciprocal
to~$\varepsilon$. Therefore, the algorithm of construction of the
sequence $\psi_n(x;\varepsilon)$ is a method of asymptotic
iterations (for detail, see~\cite{Boglaev_76_SMD, Boglaev_Zhdanov_Stel'makh_78_DE, Bukzhalev_2017_MUCMC, Bukzhalev_2017_CMMP}). The
sequences $y^i_n(x;\varepsilon)$ are also called asymptotic
iterative sequences of the $(i-1)$th derivative of the solution
$y(x;\varepsilon)$ of the problem considered.

The possibility of application of the method of asymptotic
iterations is related to the fulfillment of the condition
\eqref{aij} for coefficients of the right-hand side of the
equation. However, the fulfillment of these conditions allows one
to apply the method of boundary-layer functions (see, e.g.,
\cite{Vasil'eva_Butuzov_Kalachev_1995_book}). One can immediately
verify that the deviation $y^1_n(x;\varepsilon)$ from the $n$th
partial sum $Y_n(x;\varepsilon)$ (which is called the asymptotics
or the asymptotic expansion of $n$th order) of the series
$Y(x;\varepsilon)$ obtained by the method of boundary-layer
functions has the form~$O(\varepsilon^{n+1})$. Thus, the
convergence of the sequence $y^1_n(x;\varepsilon)$ enables the
using of the method of asymptotic iterations for the justification
of asymptotic expansions obtained by the method of boundary-layer
functions (i.e., to the proof of the fact that the difference of
$Y_n(x;\varepsilon)$ and the solution $y(x;\varepsilon)$ has the
form $O(\varepsilon^{n+1})$ uniformly with respect to $x \in
[0,X]$).

Note that the convergence (uniform with respect to $\varepsilon$)
as $\varepsilon \in (0,\varepsilon_0]$ of asymptotic sequences
$y^i_n(x;\varepsilon)$ is a fundamental advantage of the method of
asymptotic iterations over the method of boundary-layer functions,
which allows one to construct an asymptotic series, which is, in
general does not converge even for arbitrarily
small~$\varepsilon$. The reason is that the estimate of the
deviation of $y^1_n(x;\varepsilon)$ from $Y_n(x;\varepsilon)$,
which has the form~$O(\varepsilon^{n+1})$, is not uniform with
respect to~$n$, so that this deviation may be not infinitesimal as
$n\to\infty$ but even unboundedly increasing.

Another advantage of the sequence $\psi_n(x;\varepsilon)$ is the
possibility of construction of all its terms under modest
smoothness conditions for the functions $a_i(x)$ and $b(x)$: for
the construction of all $\psi_n(x;\varepsilon)$ it suffices that
$a_i(x), b(x) \in C^1[0,X]$, while for the construction of all
terms of the series $Y(x;\varepsilon)$ the infinite
differentiability of $a_i(x)$ and $b(x)$ is required.

\section{Statement of the Problem and Auxiliary Estimates}

Consider the Cauchy problem for the linear, inhomogeneous,
singularly perturbed differential equation of order~$m$:
\begin{gather}
 \label{de}
 \varepsilon^m  y^{(m)}
=\varepsilon^{m-1} a_{m-1}(x) y^{(m-1)} +\dots+a_0(x) y+b(x),
 \quad x \in (0,X];
 \\
 \label{ic}
 y(0;\varepsilon)=y^0,\quad \ldots,\quad
 y^{(m-1)}(0;\varepsilon)=\frac{y^{m-1}}{\varepsilon^{m-1}},
\end{gather}
where $\varepsilon>0$ is the perturbation parameter, $X>0$, $y^0, \ldots$,
$y^{m-1} \in \RR$, and $a_0(x), \ldots$, $a_{m-1}(x)$, $b(x) \in C^1[0,X]$.
Moreover, we assume that the coefficients $a_i(x)$ satisfy the Routh--Hurwitz
condition for all $x \in [0,X]$ (see, e.g., \cite{Gantmacher_2000_book}):
\begin{equation}\label{aij}
 \begin{gathered}
 -a_{00}(x)>0,
 \quad
 \begin{vmatrix}
 a_{00}(x) & a_{01}(x) \\
 a_{10}(x) & a_{11}(x) \\
 \end{vmatrix}>0,
 \quad \ldots,
 \\
 {(-1)}^m \begin{vmatrix}
 a_{00}(x) & \ldots & a_{0(m-1)}(x) \\
 \vdots & \ddots & \vdots \\
 a_{(m-1)0}(x) & \ldots & a_{(m-1)(m-1)}(x) \\
 \end{vmatrix}>0,
 \end{gathered}
\end{equation}
where
\begin{gather*}
 a_{ij}(x) := \begin{cases}
 a_{2i-j}(x) & \text{for $0 \leq 2i-j<m$},
 \\
 -1 & \text{for $2i-j=m$},
 \\
 0, & \text{for $2i-j<0$ or $2i-j>m$}.
 \end{cases}
\end{gather*}

Recall that for the fulfillment of the conditions~\eqref{aij} it
is necessary (and for $m \in \{1,2\}$ is also sufficiently) that
all $a_i(x)$ be negative.

Let $p$ be that mapping, which to each $x \in [0,X]$ puts in
corresponding the polynomial
\begin{gather}\label{p(x)}
 p(x) :=\lambda^m-a_{m-1}(x) \lambda^{m-1}
 -\cdots-a_1(x) \lambda-a_0(x).
\end{gather}
Since the degree of the polynomial $p(x)$ is~$m$ on the whole
segment~$[0,X]$, there exist functions $\lambda_1, \dots,
\lambda_m:[0,X] \to \CC$ such that
\begin{equation*}
 p(x)=(\lambda-\lambda_1(x)) \ldots (\lambda-\lambda_m(x))
\end{equation*}
for each $x \in [0,X]$; the functions $\lambda_1(x), \dots, \lambda_m(x)$ are
called roots of the polynomial $p(x)$. The ordered set $(\lambda_1, \dots,
\lambda_m)$ of the function $\lambda_i$ is called the vector-function of
roots of the mapping~$p$. Note that there exist infinitely many
vector-functions of roots since for each $x \in [0,X]$ we can list the roots
of the polynomial~$p(x)$ in various orders. We fix one of the possible
orderings.

By the Routh--Hurwitz criterion (see~\cite{Gantmacher_2000_book}), the real parts of the
roots of the polynomial $p(x)$ are negative if and only if its coefficients
$a_i(x)$ satisfy the inequalities~\eqref{aij}. Thus, for all $(i,x) \in \{1,
\dots, m\} \times [0,X]$, the inequality
\begin{equation}\label{re la}
 \re \lambda_i(x)<0
\end{equation}
holds.

We prove that each of the function $\re \lambda_i$ is bounded on
the segment $[0,X]$ from the above by a certain negative constant.

Let $P$ be the mapping that to each $M = (a_0, \dots, a_{m-1}) \in
\CC^m$ puts in corresponding the polynomial
\begin{gather}\label{PM}
 P(M) :=\lambda^m-a_{m-1} \lambda^{m-1}-\cdots-a_1 \lambda-a_0.
\end{gather}
Denote by $\{\Lambda\}$ the set of all mappings $\Lambda: \CC^m
\to \CC^m$, which to each $M \in \CC^m$ put in correspondence an
ordered set $(\lambda^1, \dots, \lambda^m)$ of roots of the
equation $P(M)=0$ (we assume that each root is repeated as many
times as its multiplicity). In fact, the choice of $\Lambda \in
\{\Lambda\}$ means the choice of numbering of roots of the
polynomial $P(M)$ for each $M \in \CC^m$. It is easy to verify
that for $m \geq 2$ the set $\{\Lambda\}$ contains no mappings
continuous in the whole space~$\CC^m$ (see~\cite{2017arXiv171000640B}). However, it is known that
for each $m$ and any point~$M_0 \in \CC^m$, there exists a mapping
$\Lambda_{M_0} \in \{\Lambda\}$ continuous at this point (see,
e.g., \cite{Ostrowski_1966_book}).

Let $\varphi$ be the mapping, which to each $\Lambda \in
\{\Lambda\}$ puts in correspondence the vector-function
$\lambda=(\lambda^1, \dots, \lambda^m) \in \big(\CC^m \to \CC
\big)^m$ whose components $\lambda^i$ to each $M \in \CC^m$ put in
correspondence the $i$th coordinates of~$\Lambda(M)$:
$\Lambda(M)=(\lambda^1(M), \dots, \lambda^m(M))$. Obviously,
$\varphi$ is a bijective correspondence between $\{\Lambda\}$ and
$\{\lambda\} :=\varphi(\{\Lambda\})$. Moreover, the continuity of
the mapping $\Lambda$ is equivalent to the continuity of the
corresponding vector-function $\varphi(\Lambda)$, which, in its
turn, is equivalent to the continuity of all its components.

\begin{lem}\label{lem1}
Let $\lambda=(\lambda^1, \dots, \lambda^m) \in \{\lambda\}$. Then
$$
 \overline \Lambda(M) :=\max \{\re\lambda^1(M), \dots,
 \re\lambda^m(M)\}
$$
is a continuous function of~$M$.
\end{lem}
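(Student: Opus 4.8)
The plan is to exploit the fact that, although no globally continuous numbering of the roots exists for $m \geq 2$, the quantity $\overline\Lambda(M)$ is a \emph{symmetric} function of the roots and hence does not depend on the chosen numbering $\Lambda \in \{\Lambda\}$ (equivalently, on $\lambda \in \{\lambda\}$). Indeed, permuting the components $\lambda^1(M), \dots, \lambda^m(M)$ leaves the set $\{\re\lambda^1(M), \dots, \re\lambda^m(M)\}$, and therefore its maximum, unchanged. Thus any two numberings yield functions $\overline\Lambda$ that coincide pointwise, and it suffices to establish continuity using whichever numbering is most convenient at each individual point.

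First I would fix an arbitrary point $M_0 \in \CC^m$ and invoke the result quoted just before the lemma: there exists a mapping $\Lambda_{M_0} \in \{\Lambda\}$ that is continuous at $M_0$. Let $\lambda_{M_0} = \varphi(\Lambda_{M_0}) = (\lambda^1_{M_0}, \dots, \lambda^m_{M_0})$ be the associated vector-function. By the equivalence noted in the excerpt (continuity of $\Lambda$ is equivalent to continuity of all components of $\varphi(\Lambda)$), each component $\lambda^i_{M_0}$ is continuous at $M_0$, and hence so is each real part $\re\lambda^i_{M_0}$.

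Next I would use the elementary fact that the maximum of finitely many functions continuous at a point is again continuous at that point (for instance via $\max(f,g)=\tfrac12(f+g+|f-g|)$ and induction). It follows that the function $M \mapsto \max\{\re\lambda^1_{M_0}(M), \dots, \re\lambda^m_{M_0}(M)\}$ is continuous at $M_0$. By the numbering-independence established in the first step, this function equals $\overline\Lambda$ identically, so $\overline\Lambda$ is continuous at $M_0$. Since $M_0 \in \CC^m$ was arbitrary, $\overline\Lambda$ is continuous on all of $\CC^m$.

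The main obstacle to keep in sight is precisely the one the text has already flagged: one cannot prove continuity by treating the $\lambda^i$ as globally continuous functions, because in general they are not. The whole argument hinges on disentangling the two roles played by the numbering---it must be chosen \emph{locally} (near each base point $M_0$) to secure continuity of the individual components, while the output $\overline\Lambda$ must simultaneously be seen to be \emph{independent} of that choice, so that the separate local arguments glue together into one well-defined function continuous everywhere.
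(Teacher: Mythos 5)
Your proof is correct and takes essentially the same route as the paper: fix an arbitrary $M_0 \in \CC^m$, choose a numbering $\lambda_{M_0} \in \{\lambda\}$ continuous at $M_0$, pass to the real parts, and use continuity of the maximum of finitely many continuous functions. The numbering-independence of $\overline\Lambda$ that you argue explicitly via symmetry is exactly the content of the remark the paper places just before its proof, so the two arguments match point for point.
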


\begin{rem*}
For each point $M \in \CC^m$, the unordered set of roots of the
polynomial $P(M)$ and the value $\overline \Lambda(M)$ are
independent of the choice of $\lambda \in \{\lambda\}$. Thus, to
each $\lambda \in \{\lambda\}$ (i.e., to each way of numbering of
roots of the polynomial~$P(M)$) the same function $\overline
\Lambda$ corresponds.
\end{rem*}

\begin{proof}[Proof of Lemma \ref{lem1}]
Fix an arbitrary point $M_0 \in \CC^m$ and choose a mapping
$\lambda_{M_0}=(\lambda^1_{M_0}$, $\dots,$ $\lambda^m_{M_0}) \in
\{\lambda\}$ continuous at this point. Each of the functions
$\lambda^i_{M_0}$ is also continuous at the point~$M_0$. But the
continuity of $\lambda^i_{M_0}$ implies the continuity of $\re
\lambda^i_{M_0}$, whereas the continuity of all $\re
\lambda^i_{M_0}$, in its turn, implies the continuity of the
maximum of these functions.
\end{proof}

\begin{con}\label{cor1}
There exist positive $\chi$ (independent of $i$ and $x$) such
that
\begin{equation*}
 \re\lambda_i(x)<-\chi
\end{equation*}
for all $(i,x)\in\{1,\dots,m\}\times[0,X]$, where $\lambda_i(x)$
is the $i$th root of the polynomial $p(x)$ (see \eqref{p(x)}) for
each $x \in [0,X]$.
\end{con}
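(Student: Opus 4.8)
The plan is to combine the global continuity of $\overline\Lambda$ established in Lemma~\ref{lem1} with the compactness of $[0,X]$ and the strict negativity~\eqref{re la}, invoking the Weierstrass extreme value theorem. The corollary is essentially the statement that a continuous, everywhere-negative function on a compact set is bounded away from zero.

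First I would observe that the polynomial $p(x)$ from~\eqref{p(x)} is precisely $P(M(x))$, where $M(x):=(a_0(x),\dots,a_{m-1}(x))$ and $P$ is the mapping in~\eqref{PM}. Consequently the roots $\lambda_1(x),\dots,\lambda_m(x)$ of $p(x)$ are exactly the roots of $P(M(x))$, and therefore
\begin{equation*}
 \max\{\re\lambda_1(x),\dots,\re\lambda_m(x)\}=\overline\Lambda(M(x))
\end{equation*}
for every $x\in[0,X]$. By the Remark following Lemma~\ref{lem1}, the right-hand side does not depend on the particular ordering of roots we have fixed, so this identity is unambiguous.

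Next I would note that the map $x\mapsto M(x)$ is continuous from $[0,X]$ into $\CC^m$, since each coefficient satisfies $a_i\in C^1[0,X]\subset C[0,X]$. Composing with $\overline\Lambda$, which is continuous on all of $\CC^m$ by Lemma~\ref{lem1}, I obtain that $g(x):=\overline\Lambda(M(x))$ is continuous on $[0,X]$, and by~\eqref{re la} it satisfies $g(x)<0$ for every $x\in[0,X]$. Since $[0,X]$ is compact, the Weierstrass theorem guarantees that $g$ attains its maximum at some $x^\ast\in[0,X]$; set $c:=-g(x^\ast)$, so that $c>0$. It then suffices to take any $\chi\in(0,c)$, for instance $\chi:=c/2$: for every $(i,x)$ we have $\re\lambda_i(x)\le g(x)\le g(x^\ast)=-c<-\chi$, which is the desired estimate.

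The only point requiring care, and the step I would watch most closely, is the identification of $\overline\Lambda(M(x))$ with the maximal real part of the roots of $p(x)$ \emph{independently of the chosen numbering} of those roots — the arbitrary ordering fixed earlier need not coincide with a locally continuous one. This is exactly what the Remark after Lemma~\ref{lem1} resolves, so I do not anticipate a genuine obstacle; the remainder is a direct application of compactness.
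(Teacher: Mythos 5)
Your proof is correct and takes essentially the same route as the paper: identify $\overline\lambda(x)=\overline\Lambda(a_0(x),\dots,a_{m-1}(x))$ (with the Remark after Lemma~\ref{lem1} handling independence of the root ordering), use continuity of $\overline\Lambda$ composed with the continuous coefficients, negativity from \eqref{re la}, and the Weierstrass extreme-value theorem on the compact segment $[0,X]$. The one small divergence is that you take $\chi:=c/2$ rather than $\chi:=c=-\max_{[0,X]}\overline\lambda(x)$ as the paper does, which actually secures the \emph{strict} inequality $\re\lambda_i(x)<-\chi$ at the maximizing point, where the paper's choice literally yields only $\le$ there.
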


\begin{rem*}
For each $x \in [0,X]$, the unordered set of roots of the
polynomial $p(x)$ and the value $\overline \lambda(x) :=\max
\{\re\lambda_1(x), \dots, \re\lambda_m(x)\}$ are independent of
the way of numbering of these roots.
\end{rem*}

\begin{proof}[Proof of Corollary \ref{cor1}]
Let $\lambda=(\lambda^1, \dots, \lambda^m)$ be an arbitrary
mapping from~$\{\lambda\}$. By the remark above, without loss of
generality, we can assume that
$$
 \lambda_i(x)=\lambda^i(a_0(x),\dots,a_{m-1}(x))
 \quad \forall (i,x) \in \{1, \dots, m\} \times [0,X].
$$
Since the function $\overline \lambda(x)$, which is equal to~$\overline
\Lambda(a_0(x),\dots,a_{m-1}(x))$, is continuous (as a composite function)
and negative (see~\eqref{re la}) on the whole segment $[0,X]$, by the
Weierstrass extreme-value theorem, there exists $x_0 \in [0,X]$ such that
\begin{equation}\label{vkp}
\begin{aligned}
 \chi :&=-\overline\lambda(x_0)
 =-\max_{[0,X]}\overline\Lambda\big(a_0(x),\dots,a_{m-1}(x)\big)
 \\
 &=-\max_{[0,X]}\max\big\{\re\lambda_1(x),\dots,
 \re\lambda_m(x)\big\}>0.
\end{aligned}
\end{equation}
The proof is complete.
\end{proof}

\begin{rem*}
One can prove that there exist continuous functions
$\lambda_1(x),\dots,\lambda_m(x)$ that describe the set of all
roots (with account of multiplicities) of the polynomial~$p(x)$
for each $x \in [0,X]$; here the fact that the variable~$x$ is
one-dimensional is substantial.
\end{rem*}

Consider the following auxiliary problem:
\begin{gather}
 \label{ae+}
 a_0(x) \bar y+b(x)=0, \quad x \in [0,X];
 \\
 \label{de+}
 \frac{d^{m}\Pi}{{d\xi}^{m}}
 =a_{m-1}(0) \frac{d^{m-1}\Pi}{{d\xi}^{m-1}}
 +\cdots+a_0(0) \Pi,
 \quad \xi \in \left(0,\frac{X}{\varepsilon}\right];
 \\
 \label{ic+}
 \Pi(0)=y^0-\bar y(0), \quad
 \frac{d\Pi}{d\xi}(0)=y^1, \quad \ldots, \quad
 \frac{d^{m-1}\Pi}{{d\xi}^{m-1}}(0)=y^{m-1}.
\end{gather}
Equation~\eqref{ae+} is an algebraic equation of the first degree
with respect to~$\bar y(x)$, whereas \eqref{de+} is an autonomous
homogeneous linear differential equation for~$\Pi(\xi)$. The
solution of the problem~\eqref{ae+}--\eqref{ic+} has the form
\begin{equation}\label{wty}
 \begin{aligned}
 \bar y(x)&=-\frac{b(x)}{a_0(x)},
 \\
 \Pi(\xi)&=\alpha_{11} e^{\lambda_1(0) \xi}+\cdots
 +\alpha_{1m_1} \xi^{m_1-1} e^{\lambda_{m_1}(0) \xi}+\cdots
 \\
 &+\alpha_{q1} e^{\lambda_{m_1+\cdots+m_{q-1}+1}(0) \xi}+\cdots
 +\alpha_{qm_q} \xi^{m_q-1}
 e^{\lambda_{m_1+\cdots+m_{q-1}+m_q}(0) \xi},
 \end{aligned}
\end{equation}
where $\lambda_1(0)=\dots=\lambda_{m_1}(0)$, \dots,
$\lambda_{m_1+\cdots+m_{q-1}+1}(0)=\dots=\lambda_{m_1+\cdots+m_q}(0)$
are roots of the polynomial~$p(0)$ (see~\eqref{p(x)}),
$\alpha_{11}$, \dots, $\alpha_{qm_q}$ are constants that are
uniquely expressed through $y^0-\bar y(0)$, $y^1$, \dots,
$y^{m-1}$ and $\lambda_1(0)$, \dots, $\lambda_m(0)$ (here
$m_1+\dots+m_q=m$).

We see from \eqref{wty} and \eqref{vkp} that for sufficiently
large $\tilde C$ the functions $\Pi^{(i)}(\xi)$ satisfy the
estimate
\begin{gather}\label{est wty}
 \big|\Pi^{(i)}(\xi)\big| \leq
 \tilde C (1+\xi^{m-1}) e^{-\chi \xi},
 \quad (i,\xi) \in \{0, \dots, m-1\} \times [0,+\infty).
\end{gather}

In the problem \eqref{de}--\eqref{ic}, we perform the following
change of variables:
\begin{equation}\label{subs}
 \begin{aligned}
 x&=\varepsilon \xi,
 \\
 y(x;\varepsilon)&=\tilde y(\xi,x)
 +\varepsilon z^1(\xi;\varepsilon),
 \\
 \frac{d^{i-1}y}{{dx}^{i-1}}(x;\varepsilon)
 &=\varepsilon^{1-i} \frac{d^{i-1}\Pi}{{d\xi}^{i-1}}(\xi)
 +\varepsilon^{2-i} z^i(\xi;\varepsilon),
 \quad i=\overline{2,m},
 \end{aligned}
\end{equation}
where $\tilde y(\xi,x) :=\bar y(x)+\Pi(\xi)$.

For the new functions $z^i(\xi;\varepsilon)$ we obtain the
following initial-value problem:
\begin{gather}
 \label{dez1}
 \frac{dz^1}{d\xi}=z^2-\bar y'(\varepsilon \xi), \quad
 \xi \in \left(0,\frac{X}{\varepsilon}\right];
 \\
 \label{de zi}
 \frac{dz^i}{d\xi}=z^{i+1}, \quad
 (i,\xi) \in \{2, \dots, m-1\}
 \times \left(0,\frac{X}{\varepsilon}\right];
 \\
 \label{de zm}
 \frac{dz^m}{d\xi}=a_{m-1}(\varepsilon \xi) z^m
 +\dots+a_0(\varepsilon \xi) z^1+f(\xi;\varepsilon), \quad
 \xi \in \left(0,\frac{X}{\varepsilon}\right];
 \\
 \label{iczi}
 z^1(0;\varepsilon)=\ldots=z^m(0;\varepsilon)=0
\end{gather}
(\eqref{dez1} only for $m \geq 2$, \eqref{de zi} only for $m \geq
3$), where
\begin{gather}
 \label{f}
 f(\xi;\varepsilon) := \begin{cases}
 \varepsilon^{-1} \Big\{ \big[a_{m-1}(\varepsilon \xi)-a_{m-1}(0)\big]
 \\
 \qquad \times \Pi^{(m-1)}(\xi)+\dots+\big[a_0(\varepsilon \xi)-a_0(0)\big]
 \Pi(\xi) \Big\} & \text{for $m \geq 2$};
 \\
 \varepsilon^{-1} \big[a_0(\varepsilon \xi)-a_0(0)\big]\Pi(\xi)
 -\bar y'(\varepsilon \xi) & \text{for $m=1$}.
 \end{cases}
\end{gather}

We transform Eq.~\eqref{de zm} adding the variable~$x$ as a new
parameter:
\begin{multline}\label{de zm+}
 \frac{dz^m}{d\xi}=a_{m-1}(x) z^m+\dots+a_0(x) z^1
 +\big[a_{m-1}(\varepsilon \xi)-a_{m-1}(x)\big] z^m+\dots
 \\
 +\big[a_0(\varepsilon \xi)-a_0(x)\big] z^1
 +f(\xi;\varepsilon),
 \quad (\xi,x) \in \left(0,\frac{X}{\varepsilon}\right] \times [0,X].
\end{multline}

The problem \eqref{dez1}, \eqref{de zi}, \eqref{de zm+},
\eqref{iczi} is equivalent to the following system of integral
equations:
\begin{multline}\label{ie}
 z^i(\xi;\varepsilon)=- \int_0^\xi
 \Phi^1_{\xi^{i-1}}(\xi-\zeta; x)
 \bar y'(\varepsilon \zeta) d\zeta
 \\
 +\int_0^\xi \Phi^m_{\xi^{i-1}}(\xi-\zeta; x)
 \Big\{\big[a_{m-1}(\varepsilon \zeta)-a_{m-1}(x)\big]
 z^m(\zeta;\varepsilon)
 \\
 +\dots+\big[a_0(\varepsilon \zeta)-a_0(x)\big]
 z^1(\zeta;\varepsilon)+f(\zeta;\varepsilon) \Big\} d\zeta,
 \\
 \quad (i,\xi,x) \in \overline{1,m}
 \times \left[0,\frac{X}{\varepsilon}\right] \times [0,X],
\end{multline}
where $\Phi^j_{\xi^{i-1}}(\xi-\zeta; x)=K^i_j(\xi,\zeta;x)$ are
the entries of the Cauchy matrix
\begin{gather*}
 K(\xi,\zeta;x) := \begin{bmatrix}
 \Phi^1(\xi-\zeta; x) & \Phi^2(\xi-\zeta; x)
 & \dots & \Phi^m(\xi-\zeta; x)
 \\
 \Phi^1_\xi(\xi-\zeta; x) & \Phi^2_\xi(\xi-\zeta; x)
 & \dots & \Phi^m_\xi(\xi-\zeta; x)
 \\
 \vdots & \vdots & \ddots & \vdots
 \\
 \Phi^1_{\xi^{m-1}}(\xi-\zeta; x) &
 \Phi^2_{\xi^{m-1}}(\xi-\zeta; x) & \dots &
 \Phi^m_{\xi^{m-1}}(\xi-\zeta; x)
 \end{bmatrix}
\end{gather*}
of the corresponding homogeneous system
\begin{equation*}
 \frac{dz^1}{d\xi}=z^2, \quad \dots, \quad
 \frac{dz^{m-1}}{d\xi}=z^m, \quad
 \frac{dz^m}{d\xi}=a_{m-1}(x) z^m+\dots+a_0(x) z^1.
\end{equation*}

Note that the functions $\Phi^1(\xi;x)$ and $\Phi^m(\xi;x)$ used
in~\eqref{ie}, due to the definition of the Cauchy matrix, are the
solutions of the following initial-value problems:
\begin{gather}
 \label{de Phi1}
 \frac{d^{m}\Phi^1}{{d\xi}^{m}}
 =a_{m-1}(x) \frac{d^{m-1}\Phi^1}{{d\xi}^{m-1}}
 +\dots +a_0(x) \Phi^1,
 \quad (\xi,x) \in \RR \times [0,X];
 \\
 \label{ic Phi1}
 \Phi^1(0;x)=1, \quad \frac{d\Phi^1}{d\xi}(0;x)
 =\ldots=\frac{d^{m-1}\Phi^1}{{d\xi}^{m-1}}(0;x)=0,
 \quad x \in [0,X];
 \\
 \label{de Phim}
 \frac{d^{m}\Phi^m}{{d\xi}^{m}}
 =a_{m-1}(x) \frac{d^{m-1}\Phi^m}{{d\xi}^{m-1}}
 +\dots+a_0(x) \Phi^m,
 \quad (\xi,x) \in \RR \times [0,X];
 \\
 \label{ic Phim}
 \Phi^m(0;x)=\ldots
 =\frac{d^{m-2}\Phi^m}{{d\xi}^{m-2}}(0;x)=0, \quad
 \frac{d^{m-1}\Phi^m}{{d\xi}^{m-1}}(0;x)=1,
 \quad x \in [0,X].
\end{gather}

From \eqref{de Phi1}--\eqref{ic Phim} and the theorems on the
continuity and differentiability with respect to parameters of
solutions of initial-value problems we conclude that $\Phi^1(\xi;x)$,
$\Phi^m(\xi;x) \in C^{\infty,1}(\RR \times [0,X])$.

Since the solution $(z^1,\dots,z^m)$ of the system~\eqref{ie} is
clearly independent of~$x$, we can replace $x$ in~\eqref{ie} by an
arbitrary function~$\xi$ and $\varepsilon$ with values in $[0,X]$. Then, setting $x=\varepsilon \xi$, we
arrive at the following equations for $z^i(\xi;\varepsilon)$:
\begin{multline}\label{iezi}
 z^i(\xi;\varepsilon)=-\int_0^\xi \Phi^1_{\xi^{i-1}}
 (\xi-\zeta; \varepsilon \xi)
 \bar y'(\varepsilon \zeta) d\zeta
 +\int_0^\xi \Phi^m_{\xi^{i-1}}(\xi-\zeta; \varepsilon \xi)
 \\
 \times \Big\{\big[a_{m-1}(\varepsilon \zeta)
 -a_{m-1}(\varepsilon \xi)\big] z^m(\zeta;\varepsilon)+\dots
 +\big[a_0(\varepsilon \zeta)-a_0(\varepsilon \xi)\big]
 z^1(\zeta;\varepsilon)+f(\zeta;\varepsilon) \Big\} d\zeta
 \\
 =:\widehat A_i(\varepsilon)[z^1,\dots,z^m](\xi;\varepsilon),
 \quad (i,\xi) \in \overline{1,m}
 \times \left[0,\frac{X}{\varepsilon}\right],
\end{multline}
(the first integral only for $m \geq 2$) or briefly
\begin{multline}\label{ie+}
 \big(z^1(\xi;\varepsilon), \dots, z^m(\xi;\varepsilon)\big)
 \\
 =\big(\widehat A_1(\varepsilon)[z^1, \dots, z^m]
 (\xi;\varepsilon),\ \dots,\
 \widehat A_m(\varepsilon)[z^1, \dots, z^m]
 (\xi;\varepsilon)\big)=
 \\
 =: \widehat A(\varepsilon)[z^1, \dots, z^m](\xi;\varepsilon),
 \quad \xi \in \left[0,\frac{X}{\varepsilon}\right],
\end{multline}
where for each fixed $\varepsilon \in (0,+\infty)$ by the domain of the
operator $\widehat A(\varepsilon)$ we mean the space $C_m[0,X/\varepsilon]$
of $m$-dimensional vector-functions continuous on the segment
$[0,X/\varepsilon]$:
\begin{equation*}
 \widehat A(\varepsilon) :
 C_m\left[0,\frac{X}{\varepsilon}\right]
 \to C_m\left[0,\frac{X}{\varepsilon}\right].
\end{equation*}

In the sequel we need one auxiliary property of the solution~$w$
of the Cauchy problem for a linear differential equation with
constant coefficients considered as parameters for~$w$:
\begin{gather}\label{dewn}
 \frac{d^{m}w}{{d\xi}^{m}}
 =a_{m-1} \frac{d^{m-1}w}{{d\xi}^{m-1}}+\cdots+a_0 w,
 \quad \xi \in (0,+\infty);
 \\
 \label{icwn}
 w(0;M_m,N_m)=w^0,\quad \ldots,\quad
 \frac{d^{m-1}w}{{d\xi}^{m-1}}(0;M_m,N_m)=w^{m-1},
\end{gather}
where $M_m=(a_0, \dots, a_{m-1}) \in \CC^m$ and $N_m=(w^0,
\dots, w^{m-1}) \in \CC^m$.

Introduce the following notation:
\begin{equation*}
 \overline \Lambda_m(M_m)
 :=\max \{\re\lambda^1(M_m), \dots, \re\lambda^m(M_m)\},
\end{equation*}
where $\lambda^1(M_m)$, \dots, $\lambda^m(M_m)$ are the roots of
the characteristic polynomial of Eq.~\eqref{dewn}
(see~\eqref{PM}),
\begin{equation*}
 \Pi_m(C) :=\big\{(x_1, \dots, x_m) \in \CC^m :
 |x_1| \leq C,\ \dots,\  |x_m| \leq C \big\}.
\end{equation*}

\begin{lem}\label{lem2}
Let $C_a \geq 0$ and $C_w \geq 0$. Then there exists $\tilde C_m
\geq 0$ such that
\begin{gather*}
 \left| \frac{d^{i}w}{{d\xi}^{i}}(\xi;M_m,N_m) \right|
 \leq \tilde C_m (1+\xi^{m-1})
 e^{\overline \Lambda_m(M_m) \xi}
\end{gather*}
for all $(i,\xi,M_m,N_m) \in \{0, \dots, m-1 \} \times [0,
+\infty) \times \Pi_m(C_a) \times \Pi_m(C_w)$, where
$w(\xi;M_m,N_m)$ is a solution of the
problem~\eqref{dewn}--\eqref{icwn}.
\end{lem}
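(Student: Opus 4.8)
The plan is to represent the solution through its Laplace transform and to evaluate the resulting contour integral as a divided difference of the exponential, the latter being the device that keeps the estimate uniform as characteristic roots coalesce. First I would apply the Laplace transform in $\xi$ to the problem \eqref{dewn}--\eqref{icwn}: writing $W$ for the transform of $w$, the equation gives $p(s)W(s)=Q(s)$, where $p(s)=s^m-a_{m-1}s^{m-1}-\cdots-a_0$ is the characteristic polynomial (cf.\ \eqref{PM}) and $Q(s)=\sum_{k=0}^{m-1}q_ks^k$ is a polynomial whose coefficients $q_k$ are explicit polynomials in $a_0,\dots,a_{m-1}$ and linear in $w^0,\dots,w^{m-1}$; hence $|q_k|$ is bounded by a constant depending only on $m,C_a,C_w$ whenever $(M_m,N_m)\in\Pi_m(C_a)\times\Pi_m(C_w)$. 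Inverting and differentiating $i$ times under the integral sign yields, for $i\in\{0,\dots,m-1\}$,
\begin{equation*}
 \frac{d^i w}{d\xi^i}(\xi;M_m,N_m)
 =\frac{1}{2\pi\mathrm i}\oint_\Gamma\frac{s^iQ(s)}{p(s)}\,e^{s\xi}\,ds,
\end{equation*}
with $\Gamma$ any fixed contour enclosing all roots of $p$.

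Next I would use the residue theorem together with the classical identity that, for a monic $p(s)=\prod_{j=1}^m\big(s-\lambda_j(M_m)\big)$, the sum of the residues of $g_i(s)/p(s)$ over the enclosed roots equals the divided difference of $g_i$ on the nodes $\lambda_1(M_m),\dots,\lambda_m(M_m)$ (counted with multiplicity), where $g_i(s):=s^iQ(s)e^{s\xi}$. This turns the formula above into
\begin{equation*}
 \frac{d^iw}{d\xi^i}(\xi;M_m,N_m)
 =\big[\lambda_1(M_m),\dots,\lambda_m(M_m)\big]\,g_i .
\end{equation*}
The point of passing to the divided difference is that this object depends continuously on its nodes even as they merge, whereas the naive partial-fraction (exponential-polynomial) expansion has coefficients that blow up at coalescing roots; suppressing that blow-up is exactly what a single constant $\tilde C_m$ requires.

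I would then invoke the Hermite--Genocchi formula, representing the $(m-1)$st-order divided difference as an integral over the standard simplex $\Sigma=\{(t_1,\dots,t_m):t_j\ge0,\ \sum_jt_j=1\}$ (of total mass $1/(m-1)!$):
\begin{equation*}
 \frac{d^iw}{d\xi^i}(\xi;M_m,N_m)
 =\int_\Sigma g_i^{(m-1)}\Big(\sum_{j=1}^m t_j\lambda_j(M_m)\Big)\,dt .
\end{equation*}
Two observations make the bound uniform. The evaluation point $\sigma:=\sum_jt_j\lambda_j(M_m)$ is a convex combination of the roots, so $\re\sigma=\sum_jt_j\,\re\lambda_j(M_m)\le\overline\Lambda_m(M_m)$, giving $|e^{\sigma\xi}|\le e^{\overline\Lambda_m(M_m)\xi}$; and $|\sigma|\le\max_j|\lambda_j(M_m)|\le1+C_a$ by the elementary bound on the moduli of the roots of a monic polynomial with coefficients in $\Pi_m(C_a)$. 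Expanding $g_i^{(m-1)}$ by the Leibniz rule as $\sum_{l=0}^{m-1}\binom{m-1}{l}\big(s^iQ(s)\big)^{(l)}\xi^{m-1-l}e^{s\xi}$ and evaluating at $s=\sigma$, and using that the polynomial factors are bounded on the disk $\{|z|\le1+C_a\}$ by a constant depending only on $m,C_a,C_w$, I get $|g_i^{(m-1)}(\sigma)|\le C\,(1+\xi^{m-1})e^{\overline\Lambda_m(M_m)\xi}$; integrating over $\Sigma$ produces the asserted estimate, and I take $\tilde C_m$ to be the largest of the constants obtained for $i\in\{0,\dots,m-1\}$.

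The main obstacle is precisely this uniformity near parameter values where the characteristic roots coalesce: there the Jordan structure of the companion matrix changes, and any representation of $w$ through individual modes $\xi^ke^{\lambda_j\xi}$ has coefficients that are unbounded, so a term-by-term estimate cannot yield one $\tilde C_m$. Routing the computation through the divided difference (equivalently, keeping all residues inside a single contour integral) absorbs this degeneracy; the remaining ingredients --- the Cauchy bound on the moduli of the roots and the convexity inequality $\re\sum_jt_j\lambda_j(M_m)\le\overline\Lambda_m(M_m)$ that ties the decay rate to $\overline\Lambda_m$ --- are then routine.
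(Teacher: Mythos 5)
Your proof is correct, and it takes a genuinely different route from the paper's: for Lemma~\ref{lem2} the paper offers no detailed argument at all, only the one-line assertion that the statement ``can be proved by induction on $m$'' --- i.e., an order-reduction scheme in which one splits off a characteristic root $\lambda$ with $\re\lambda=\overline\Lambda_m(M_m)$, observes that $u:=w'-\lambda w$ solves the analogous problem of order $m-1$ with coefficients and initial data again confined to polydisks (via the same Cauchy root bound you use), and closes the induction by variation of constants, $w(\xi)=e^{\lambda\xi}w^0+\int_0^\xi e^{\lambda(\xi-\zeta)}u(\zeta)\,d\zeta$; choosing the split-off root with maximal real part is what makes the integral contribute an extra power of $\xi$ rather than a constant that degenerates as roots coalesce. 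Your contour-integral/divided-difference argument collapses that $m$-stage bookkeeping into one closed formula and handles coalescence with no case analysis, because $[\lambda_1(M_m),\dots,\lambda_m(M_m)]g_i$ is continuous in the nodes --- precisely the mechanism that lets a single $\tilde C_m$ serve on all of $\Pi_m(C_a)\times\Pi_m(C_w)$ --- and it yields an explicit, in principle computable constant. The individual steps all check out: the coefficients of $Q$ are linear in $N_m$ and affine in $M_m$, hence uniformly bounded; the convexity inequality $\re\sum_j t_j\lambda_j\le\overline\Lambda_m(M_m)$ together with $|\lambda_j|\le 1+C_a$ supplies the uniformity; and $(1+\xi)^{m-1}\le 2^{m-1}\bigl(1+\xi^{m-1}\bigr)$ turns the Leibniz sum into the stated form. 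The one point deserving an explicit line in a write-up is that Hermite--Genocchi is usually stated for real nodes; for entire $g_i$ it extends to complex nodes, e.g., via the identity $\prod_{j=1}^m(s-\lambda_j)^{-1}=(m-1)!\int_\Sigma\bigl(s-\sum_j t_j\lambda_j\bigr)^{-m}\,dt$ and Cauchy's formula for derivatives, which in fact delivers both of your key identities in one stroke. The trade-off is the standard one: the paper's (merely asserted) induction uses nothing beyond first-order ODE techniques and matches the style of the rest of the paper, while your version costs a modest amount of complex analysis but is an actual complete, uniform proof.
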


The assertion of the theorem can be proved by induction on $m$.

\begin{con}
There exist $\chi>0$ and $C_\Phi>0$ such that
\begin{gather}\label{estPhi}
 |\Phi^1_{\xi^i}(\xi; x)|,\ |\Phi^m_{\xi^i}(\xi;x)|
 \leq C_\Phi (1+\xi^{m-1}) e^{-\chi \xi}
\end{gather}
for all $(i,\xi,x) \in \{0, \dots, m-1 \} \times [0,+\infty)
\times [0,X]$, where $\Phi^1(\xi;x)$ and $\Phi^m(\xi;x)$ are the
solutions of the problems \eqref{de Phi1}--\eqref{ic Phi1}
and~\eqref{de Phim}--\eqref{ic Phim}, respectively.
\end{con}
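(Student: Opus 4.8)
The plan is to view $\Phi^1(\cdot\,;x)$ and $\Phi^m(\cdot\,;x)$, for each fixed $x \in [0,X]$, as particular instances of the solution $w$ of the parametrized Cauchy problem \eqref{dewn}--\eqref{icwn}, and then to combine Lemma~\ref{lem2} with Corollary~\ref{cor1} to obtain the uniform estimate. Concretely, for each fixed $x$ the problems \eqref{de Phi1}--\eqref{ic Phi1} and \eqref{de Phim}--\eqref{ic Phim} coincide with \eqref{dewn}--\eqref{icwn} upon choosing the coefficient vector $M_m=(a_0(x),\dots,a_{m-1}(x))$ together with the initial data $N_m=(1,0,\dots,0)$ for $\Phi^1$ and $N_m=(0,\dots,0,1)$ for $\Phi^m$.

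First I would fix the two compact parameter boxes. Since the coefficients $a_i$ belong to $C^1[0,X]$ and are therefore continuous on the compact segment $[0,X]$, each is bounded; letting $C_a:=\max_{i,x}|a_i(x)|$ we have $M_m(x):=(a_0(x),\dots,a_{m-1}(x)) \in \Pi_m(C_a)$ for every $x \in [0,X]$. The initial data for both $\Phi^1$ and $\Phi^m$ have all components equal to $0$ or $1$, so $N_m \in \Pi_m(C_w)$ with $C_w:=1$. Applying Lemma~\ref{lem2} with these values of $C_a$ and $C_w$ yields a single constant $\tilde C_m \geq 0$, independent of $x$, such that
\begin{gather*}
 |\Phi^1_{\xi^i}(\xi;x)|,\ |\Phi^m_{\xi^i}(\xi;x)|
 \leq \tilde C_m (1+\xi^{m-1})
 e^{\overline\Lambda_m(M_m(x)) \xi}
\end{gather*}
for all $(i,\xi,x) \in \{0,\dots,m-1\} \times [0,+\infty) \times [0,X]$. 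The final step is to eliminate the $x$-dependence in the exponent: the quantity $\overline\Lambda_m(M_m(x))$ is precisely $\overline\lambda(x)=\max\{\re\lambda_1(x),\dots,\re\lambda_m(x)\}$, and by Corollary~\ref{cor1} it is bounded above by $-\chi$ with $\chi>0$ uniform in $x$. Since $\xi \geq 0$ and $\overline\Lambda_m(M_m(x)) \leq -\chi<0$, we get $e^{\overline\Lambda_m(M_m(x))\xi} \leq e^{-\chi\xi}$, and setting $C_\Phi:=\tilde C_m$ gives exactly \eqref{estPhi}.

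I do not anticipate a serious obstacle: the argument is essentially a specialization of the already-established Lemma~\ref{lem2}, combined with the uniform spectral bound of Corollary~\ref{cor1}. The only point requiring mild care is the uniformity bookkeeping --- namely, verifying that the constant $\tilde C_m$ furnished by Lemma~\ref{lem2} depends only on $C_a$ and $C_w$ and not on the particular $x$, so that the entire $x$-dependence of the estimate is concentrated in the exponential factor, which is in turn controlled uniformly in $x$ by Corollary~\ref{cor1}.
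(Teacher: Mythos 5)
Your proposal is correct and follows exactly the paper's own route: the paper's (very terse) proof likewise combines the Weierstrass extreme-value theorem applied to the coefficients $a_i(x)$ (your $C_a$), Lemma~\ref{lem2}, and the uniform spectral bound $\chi$ from \eqref{vkp}. Your write-up merely makes explicit the bookkeeping (the choices $N_m=(1,0,\dots,0)$ and $(0,\dots,0,1)$, $C_w:=1$, and the identification $\overline\Lambda_m(M_m(x))=\overline\lambda(x)\leq-\chi$) that the paper leaves implicit.
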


\begin{proof}
To prove the estimate \eqref{estPhi} it suffices to set
\begin{equation*}
 \chi :=- \max_{[0,X]}
 \max \big\{\re\lambda_1(x), \dots, \re\lambda_m(x)\big\}
\end{equation*}
(see \eqref{vkp}) and apply the Weierstrass extreme-value theorem
on the boundedness of a continuous function for $a_i(x)$ and
Lemma~\ref{lem2}.
\end{proof}

\section{Construction and Proof of Convergence \\
 of Iterative Sequence}

Let
\begin{gather*}
 O(\vartheta,C_0;\varepsilon) := \bigg\{ (z^1, \dots, z^m) \in C_m\left[0,\frac{X}{\varepsilon}\right] : \forall \xi \in \left[0,\frac{X}{\varepsilon}\right]
 \\
 (z^1(\xi), \dots, z^m(\xi)) \in {[-C_0,+C_0]}^m \bigg\}
\end{gather*}
be a closed $C_0$-neighborhood of the vector-function $(z^1, \dots,
z^m) \equiv (0, \dots, 0) =: \vartheta$ in the space~$C_m[0,X/\varepsilon]$.

\begin{sta}
There exist $\varepsilon_0>0$ and $C_0 \geq 0$ \textup{(}$C_0$ is
independent of~$\varepsilon$\textup{)} such that
\begin{equation*}
 \widehat A(C_0;\varepsilon) : O(\vartheta,C_0;\varepsilon)
 \to O(\vartheta,C_0;\varepsilon)
\end{equation*}
for any $\varepsilon \in (0,\varepsilon_0]$, where $\widehat
A(C_0;\varepsilon)=\big(\widehat A_1(C_0;\varepsilon), \dots,
\widehat A_m(C_0;\varepsilon) \big)$ is the restriction of the
operator $\widehat A(\varepsilon)$ to~$O(\vartheta,C_0;\varepsilon)$.
\end{sta}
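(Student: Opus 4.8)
The plan is to estimate $\bigl|\widehat A_i(\varepsilon)[z^1,\dots,z^m](\xi;\varepsilon)\bigr|$ uniformly in $\xi\in[0,X/\varepsilon]$ and in $(z^1,\dots,z^m)\in O(\vartheta,C_0;\varepsilon)$, splitting the right-hand side of~\eqref{iezi} into three groups: the integral carrying $\bar y'$, the integral carrying $f$, and the integrals carrying the coefficient differences $a_j(\varepsilon\zeta)-a_j(\varepsilon\xi)$ multiplied by the unknowns $z^{j+1}$. The organising idea is that the first two groups contribute bounds that are \emph{constants independent of both $C_0$ and $\varepsilon$}, whereas the third group --- the only one in which $C_0$ reappears on the right --- carries an extra factor~$\varepsilon$; one then closes the argument by choosing $C_0$ large enough to dominate the forcing and $\varepsilon_0$ small enough to make the self-coupling harmless.

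First I would record two analytic ingredients. Because $-a_0(x)>0$ on the compact segment $[0,X]$ (a necessary consequence of~\eqref{aij}) and $a_0,b\in C^1[0,X]$, the function $\bar y=-b/a_0$ from~\eqref{wty} lies in $C^1[0,X]$, so $M_{\bar y}:=\max_{[0,X]}|\bar y'|<\infty$. Moreover each $a_j$ is Lipschitz on $[0,X]$ with constant $L:=\max_j\max_{[0,X]}|a_j'|$, whence $|a_j(\varepsilon\zeta)-a_j(\varepsilon\xi)|\le L\varepsilon|\zeta-\xi|$ and $\varepsilon^{-1}|a_j(\varepsilon\xi)-a_j(0)|\le L\xi$. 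Feeding the last inequality and~\eqref{est wty} into the definition~\eqref{f} yields an $\varepsilon$-uniform forcing bound of the form $|f(\xi;\varepsilon)|\le C_f(1+\xi^m)e^{-\chi\xi}+M_{\bar y}$, where the constant $M_{\bar y}$ occurs only in the case $m=1$.

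Next I would bound each group using the uniform kernel estimate~\eqref{estPhi}. For the $\bar y'$-integral, $|\bar y'|\le M_{\bar y}$ together with~\eqref{estPhi} and the substitution $s=\xi-\zeta$ give
\begin{equation*}
 \Bigl|\int_0^\xi\Phi^1_{\xi^{i-1}}(\xi-\zeta;\varepsilon\xi)\,\bar y'(\varepsilon\zeta)\,d\zeta\Bigr|
 \le M_{\bar y}\,C_\Phi\int_0^{\infty}(1+s^{m-1})e^{-\chi s}\,ds=:A_1 .
\end{equation*}
For the $f$-integral, the integrand is (up to the bounded term present only when $m=1$) a product of two functions each dominated by a polynomial times $e^{-\chi\,\cdot}$; the convolution of such functions is again dominated by a polynomial times $e^{-\chi\xi}$, so the whole integral is bounded on $[0,\infty)$ by a constant $A_f$. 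For the coefficient-difference terms, the Lipschitz bound, $|z^{j+1}|\le C_0$, \eqref{estPhi} and $s=\xi-\zeta$ give for each summand a bound $\varepsilon L C_0 C_\Phi\int_0^{\infty}(1+s^{m-1})\,s\,e^{-\chi s}\,ds$; summing the $m$ summands produces $\varepsilon C_0 B$ with $B$ a constant. Crucially, in all three estimates the upper limit $\xi\le X/\varepsilon$ may be replaced by $+\infty$ precisely because the integrands decay exponentially, so the bounds stay uniform as $\varepsilon\to0$ even though the length of the integration interval grows.

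Collecting the three groups yields $\bigl|\widehat A_i(\varepsilon)[z](\xi)\bigr|\le A_1+A_f+\varepsilon C_0 B$ for every $i$ and every $\xi$. It then suffices to set $C_0:=2(A_1+A_f)$ and $\varepsilon_0:=1/(2B)$: for $0<\varepsilon\le\varepsilon_0$ the right-hand side is at most $C_0/2+C_0/2=C_0$, and since $\widehat A(\varepsilon)[z]$ is continuous (being built from integrals of continuous integrands) its values lie in $[-C_0,C_0]^m$ for all $\xi$, i.e.\ $\widehat A(C_0;\varepsilon)$ maps $O(\vartheta,C_0;\varepsilon)$ into itself. I expect the only delicate point to be the isolation of the factor $\varepsilon$ in the self-coupling terms: this rests on the Lipschitz cancellation $a_j(\varepsilon\zeta)-a_j(\varepsilon\xi)=O(\varepsilon|\zeta-\xi|)$ together with the fact that the linear weight $|\zeta-\xi|$ is absorbed by the exponential decay in~\eqref{estPhi}, so that the $\zeta$-integral remains bounded uniformly as $[0,\xi]$ lengthens to $[0,X/\varepsilon]$. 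Without the uniform negativity of $\re\lambda_i$ furnished by Corollary~\ref{cor1}, these integrals would grow like $1/\varepsilon$ and the self-mapping would fail.
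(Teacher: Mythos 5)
Your proposal is correct and follows essentially the same route as the paper: the same splitting of \eqref{iezi}, the same mean-value/Lipschitz extraction of the factor $\varepsilon$ from the differences $a_j(\varepsilon\zeta)-a_j(\varepsilon\xi)$ and from $f$ via \eqref{est wty}, and the same closing choice of constants (the paper's condition \eqref{ineq C} with $\gamma_0=\tfrac12$ is exactly your $C_0=2(A_1+A_f)$, $\varepsilon_0=1/(2B)$). The only cosmetic differences are that you bound the convolution integrals by extending them to $+\infty$ where the paper evaluates them in closed form, and that the degenerate case $B=0$ (constant coefficients, which the paper handles by setting $\varepsilon_0:=+\infty$) needs a one-line remark in your version since $1/(2B)$ is then undefined.
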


\begin{proof}
We fix arbitrary $\varepsilon>0$ and $C_0 \geq 0$, apply the
operators $\widehat A_i(C_0;\varepsilon)$ to an arbitrary
vector-function $(z^1(\xi), \dots, z^m(\xi)) \in
O(\vartheta,C_0;\varepsilon)$ and, taking into account \eqref{iezi} and
\eqref{estPhi}, estimate the result obtained:
\begin{multline}\label{Ai}
 \left| \widehat A_i(C_0;\varepsilon)[z^1,\dots,z^m](\xi)\right|
 \\
 \leq C_\Phi e^{-\chi \xi}
 \Big\{C_0 \int_0^\xi e^{\chi \zeta}
 \big[1+{(\xi-\zeta)}^{m-1} \big]
 \big[|a_{m-1}(\varepsilon \zeta)-a_{m-1}(\varepsilon \xi)|+\dots
 \\
 +|a_0(\varepsilon \zeta)-a_0(\varepsilon \xi)| \big] d\zeta
 +\int_0^\xi e^{\chi \zeta}
 \big[1+{(\xi-\zeta)}^{m-1} \big]
 \big[|f(\zeta;\varepsilon)|+|\bar y'(\varepsilon \zeta)|
 \big] d\zeta \Big\}, \quad i=\overline{1,m}
\end{multline}
(the term $|\bar y'(\varepsilon \zeta)|$ only for $m \geq 2$).

For the first integral in~\eqref{Ai} we have
\begin{multline}\label{est int1}
 \int_0^\xi e^{\chi \zeta}
 \big[1+{(\xi-\zeta)}^{m-1} \big]
 \big[|a_{m-1}(\varepsilon \zeta)
 -a_{m-1}(\varepsilon \xi)|+\dots
 +|a_0(\varepsilon \zeta)-a_0(\varepsilon \xi)| \big] d\zeta
 \\
 =\varepsilon \int_0^\xi e^{\chi \zeta}
 \big[(\xi-\zeta)+{(\xi-\zeta)}^m \big] \Big\{
 \big|a_{m-1}'(\varepsilon [(1-\theta_{m-1}) \zeta
 +\theta_{m-1} \xi])\big|+\dots
 \\
 \shoveright{+\big| a_0'(\varepsilon [(1-\theta_0) \zeta+\theta_0 \xi])
 \big| \Big\} d\zeta}
 \\
 \shoveleft{\leq \varepsilon \big\{\| a_{m-1}'(x) \|+\dots
 +\| a_0'(x) \| \big\} \int_0^\xi e^{\chi \zeta}
 \big[(\xi-\zeta)
 +{(\xi-\zeta)}^m \big] d\zeta}
 \\
 =\varepsilon \alpha \Big\{\tfrac1{\chi^2}
 \big[e^{\chi \xi}-1-\chi \xi \big]
 +\tfrac{m!}{\chi^{m+1}} \big[e^{\chi \xi}
 -1-\chi \xi-\dots-\tfrac1{m!} {(\chi \xi)}^m
 \big] \Big\} \leq \varepsilon \beta e^{\chi \xi},
\end{multline}
where $\theta_i=\theta_i(\varepsilon \zeta, \varepsilon \xi) \in
(0,1)$, $\|\cdot \|$ is the norm of the space~$C[0,X]$, and
$$
 \alpha:=\|a_{m-1}'(x) \|+\dots+\| a_0'(x) \|, \quad
 \beta :=\alpha\frac{\chi^{m-1} +m!}{\chi^{m+1}}.
$$

For the second integral in~\eqref{Ai} we have (see~\eqref{f} and
\eqref{est wty})
\begin{multline}\label{est int2}
 \int_0^\xi e^{\chi \zeta}
 \big[1+{(\xi-\zeta)}^{m-1} \big]
 \big[|f(\zeta;\varepsilon)|+|\bar y'(\varepsilon \zeta)|
 \big] d\zeta \leq \int_0^\xi e^{\chi \zeta}
 \big[1+{(\xi-\zeta)}^{m-1} \big]
 \\
 \times \Big\{\tilde C
 \big[|a_{m-1}'(\varepsilon \theta_{m-1} \zeta)|+\dots
 +|a_0'(\varepsilon \theta_0 \zeta)| \big]
 (\zeta+\zeta^m) e^{-\chi \zeta}
 +|\bar y'(\varepsilon \zeta)| \Big\} d\zeta
 \\
 \leq \Big\{\tilde C \alpha \max_{\zeta>0}
 \big[(\zeta+\zeta^m) e^{-\chi \zeta} \big]
 +\| \bar y'(x) \| \Big\} \int_0^\xi e^{\chi \zeta}
 \big[1+{(\xi-\zeta)}^{m-1} \big] d\zeta
 \\
 =\Big\{\tilde C \alpha \max_{\zeta>0}
 \big[(\zeta+\zeta^m) e^{-\chi \zeta} \big]
 +\| \bar y'(x) \| \Big\}
 \Big\{\tfrac1\chi \big[e^{\chi \xi}-1 \big]
 \\
 +\tfrac{(m-1)!}{\chi^m}
 \big[e^{\chi \xi}-1-\chi \xi-\dots
 -\tfrac1{(m-1)!} {(\chi \xi)}^{m-1} \big] \Big\}
 \leq \gamma e^{\chi \xi},
\end{multline}
where $\theta_i=\theta_i(\varepsilon \zeta) \in (0,1)$,
\begin{equation*}
 \gamma :=\Big\{\tilde C \alpha \max_{\zeta>0}
 \big[(\zeta+\zeta^m) e^{-\chi \zeta} \big]
 +\| \bar y'(x) \| \Big
 \} \tfrac{\chi^{m-1}+(m-1)!}{\chi^m}.
\end{equation*}

From \eqref{Ai}, \eqref{est int1}, and \eqref{est int2} we see
that if $C_0$ and $\varepsilon$ satisfy the inequalities
\begin{gather}\label{ineq C}
 0 \leq C_0 \varepsilon C_\Phi \beta+C_\Phi \gamma \leq C_0,
\end{gather}
hence $\widehat A(C_0;\varepsilon)[z^1, \dots, z^m](\xi) \in
O(\vartheta,C_0;\varepsilon)$.

We set
\begin{gather}\label{ep0}
 \varepsilon_0 :=\gamma_0 {(C_\Phi \beta)}^{-1},
\end{gather}
where $\gamma_0$ is an arbitrary number from the interval $(0,1)$
(if $\beta=0$, i.e., $a_i(x)=\mathrm{const}$ on $[0,X]$, then
$\varepsilon_0 :=+\infty$) and $C_0 :=C_\Phi
\gamma/(1-\gamma_0)$. Then the inequalities \eqref{ineq C} hold
for any $\varepsilon \in (0,\varepsilon_0]$.
\end{proof}

Assume that for any fixed positive $\varepsilon$ and any
$\varphi_1(\xi)=(z^1_1(\xi), \dots, z^m_1(\xi))$ and
$\varphi_2(\xi)=(z^1_2(\xi), \dots, z^m_2(\xi))$ from
$C_m[0,X/\varepsilon]$, the distance $\rho_\varepsilon$ between
$\varphi_1$ and $\varphi_2$ is defined:
\begin{gather}\label{rho}
 \rho_\varepsilon(\varphi_1, \varphi_2)
 :={\| \varphi_2-\varphi_1 \|}_{C_m[0,X/\varepsilon]}
 :=\max_{\xi \in X(\varepsilon)} \max_{1 \leq i \leq m}
 |z^i_2(\xi)-z^i_1(\xi)|,
\end{gather}
where $X(\varepsilon) :=[0,X/\varepsilon]$. Note that
$C_m[0,X/\varepsilon]$ and $O(\vartheta,C_0;\varepsilon)$ with
$\rho_\varepsilon$ defined above are complete metric spaces.

\begin{sta}
The operator $\widehat A(\varepsilon)$ is a contractive operator
for any $\varepsilon \in (0,\varepsilon_0]$.
\end{sta}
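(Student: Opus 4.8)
The plan is to show that the difference $\widehat A(\varepsilon)[\varphi_2] - \widehat A(\varepsilon)[\varphi_1]$ reduces, after an exact cancellation, to an integral already controlled in the proof of the previous Proposition. First I would fix $\varepsilon \in (0,\varepsilon_0]$, take arbitrary $\varphi_1 = (z^1_1,\dots,z^m_1)$ and $\varphi_2 = (z^1_2,\dots,z^m_2)$ in $C_m[0,X/\varepsilon]$, and subtract the two images componentwise using the explicit form \eqref{iezi} of $\widehat A_i(\varepsilon)$. The crucial observation is that the first integral (containing $\bar y'(\varepsilon\zeta)$) and the free term $f(\zeta;\varepsilon)$ do not depend on the argument $(z^1,\dots,z^m)$; hence they cancel, leaving only
$$\widehat A_i(\varepsilon)[\varphi_2](\xi) - \widehat A_i(\varepsilon)[\varphi_1](\xi) = \int_0^\xi \Phi^m_{\xi^{i-1}}(\xi-\zeta;\varepsilon\xi)\sum_{j=0}^{m-1}\big[a_j(\varepsilon\zeta)-a_j(\varepsilon\xi)\big]\big[z^{j+1}_2(\zeta)-z^{j+1}_1(\zeta)\big]\,d\zeta.$$

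Next I would bound the integrand. Applying the Cauchy-matrix estimate \eqref{estPhi} to $\Phi^m_{\xi^{i-1}}$ and using $|z^{j+1}_2(\zeta)-z^{j+1}_1(\zeta)| \leq \rho_\varepsilon(\varphi_1,\varphi_2)$ for every $j$ (see the definition \eqref{rho}), I obtain
$$\big|\widehat A_i(\varepsilon)[\varphi_2](\xi) - \widehat A_i(\varepsilon)[\varphi_1](\xi)\big| \leq C_\Phi\, e^{-\chi\xi}\,\rho_\varepsilon(\varphi_1,\varphi_2)\int_0^\xi e^{\chi\zeta}\big[1+(\xi-\zeta)^{m-1}\big]\sum_{j=0}^{m-1}\big|a_j(\varepsilon\zeta)-a_j(\varepsilon\xi)\big|\,d\zeta.$$
The integral on the right is precisely the one estimated in \eqref{est int1}, whose value is bounded by $\varepsilon\beta\,e^{\chi\xi}$. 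Substituting this bound cancels the factor $e^{\chi\xi}$ against $e^{-\chi\xi}$ and yields $\big|\widehat A_i(\varepsilon)[\varphi_2](\xi)-\widehat A_i(\varepsilon)[\varphi_1](\xi)\big| \leq \varepsilon C_\Phi\beta\,\rho_\varepsilon(\varphi_1,\varphi_2)$, uniformly in $i$ and $\xi$.

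Finally, taking the maximum over $i \in \{1,\dots,m\}$ and $\xi \in [0,X/\varepsilon]$ gives
$$\rho_\varepsilon\big(\widehat A(\varepsilon)[\varphi_2],\widehat A(\varepsilon)[\varphi_1]\big) \leq k\,\rho_\varepsilon(\varphi_1,\varphi_2),\qquad k := \varepsilon C_\Phi\beta.$$
It remains to check $k<1$. By the choice \eqref{ep0} of $\varepsilon_0 = \gamma_0(C_\Phi\beta)^{-1}$ with $\gamma_0 \in (0,1)$, for every $\varepsilon \in (0,\varepsilon_0]$ one has $k = \varepsilon C_\Phi\beta = \gamma_0\,\varepsilon/\varepsilon_0 \leq \gamma_0 < 1$, so $\widehat A(\varepsilon)$ is a contraction on $C_m[0,X/\varepsilon]$ (this is also the estimate $k<\varepsilon/\varepsilon_0$ quoted in the introduction). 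I do not expect a genuine obstacle here: the whole argument is a near-verbatim reuse of the first-integral estimate \eqref{est int1}, and the only point requiring care is verifying that the $z$-independent terms cancel exactly, so that the same constant $\beta$ reappears and the contraction coefficient comes out proportional to $\varepsilon$.
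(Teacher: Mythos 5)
Your proposal is correct and follows essentially the same route as the paper: the paper's estimate \eqref{metr} is exactly your computation (the $\bar y'$- and $f$-terms cancel in the difference, leaving only the integral with the coefficient differences), after which the paper likewise invokes \eqref{est int1} and \eqref{ep0} to get $k(\varepsilon)\leq \varepsilon C_\Phi\beta=\gamma_0\,\varepsilon/\varepsilon_0\leq\gamma_0<1$. The only point you make explicit that the paper leaves implicit is the cancellation of the $z$-independent terms, which is a harmless clarification rather than a divergence.
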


\begin{proof}
Let $\rho_\varepsilon$ be the metric \eqref{rho} of the space
$C_m[0,X/\varepsilon]$. Take two arbitrary functions
$\varphi_1(\xi)=(z^1_1(\xi), \dots, z^m_1(\xi))$ and
$\varphi_2(\xi)=(z^1_2(\xi), \dots, z^m_2(\xi))$ from this space
and, taking into account \eqref{iezi} and~\eqref{estPhi}, estimate
the distance between $\widehat A(\varepsilon)[\varphi_1]$ and
$\widehat A(\varepsilon)[\varphi_2]$:
\begin{multline}\label{metr}
 \rho_\varepsilon\Big(\widehat A(\varepsilon)[\varphi_1],
 \widehat A(\varepsilon)[\varphi_2]\Big)
 =\max_{\xi \in X(\varepsilon)} \max_{1 \leq i \leq m}
 \Big| \widehat A_i(\varepsilon)[\varphi_2](\xi)
 -\widehat A_i(\varepsilon)[\varphi_1](\xi) \Big|
 \\
 =\max_{\xi \in X(\varepsilon)} \max_{1 \leq i \leq m}
 \Big| \int_0^\xi \Phi^m_{\xi^{i-1}}
 (\xi-\zeta; \varepsilon \xi)
 \Big\{ \big[a_{m-1}(\varepsilon \zeta)
 -a_{m-1}(\varepsilon \xi)\big]
 \big[z^m_2(\zeta)-z^m_1(\zeta)\big]+\dots
 \\
 \shoveright{+\big[a_0(\varepsilon \zeta)-a_0(\varepsilon \xi)\big]
 \big[z^1_2(\zeta)-z^1_1(\zeta)\big] \Big\} d\zeta \Big|}
 \\
 \shoveleft{\leq \rho_\varepsilon(\varphi_1, \varphi_2) C_\Phi
 \max_{\xi \in X(\varepsilon)} \int_0^\xi
 e^{\chi (\zeta-\xi)} \big[1+{(\xi-\zeta)}^{m-1} \big]}
 \\
 \times
 \Big[\big|a_{m-1}(\varepsilon \zeta)
 -a_{m-1}(\varepsilon \xi)\big|
 +\dots+\big|a_0(\varepsilon \zeta)
 -a_0(\varepsilon \xi)\big| \Big] d\zeta.
\end{multline}

From \eqref{metr}, \eqref{est int1}, and \eqref{ep0} we conclude
that for any $\varepsilon \in (0,\varepsilon_0]$ the contraction
coefficient $k(\varepsilon)$ of the operator $\widehat
A(\varepsilon)$ satisfies the estimate
\begin{gather}\label{ke}
 k(\varepsilon) \leq \varepsilon C_\Phi \beta
 =\gamma_0 \frac{\varepsilon}{\varepsilon_0} \leq \gamma_0<1.
\end{gather}
The proof is complete.
\end{proof}

Since the contraction coefficient $k(C_0;\varepsilon)$ of the
operator $\widehat A(C_0;\varepsilon)$ certainly does not exceed
$k(\varepsilon)$, the estimate \eqref{ke} is also valid for it:
\begin{gather}\label{kCe}
 k(C_0;\varepsilon) \leq \gamma_0
 \frac{\varepsilon}{\varepsilon_0} \leq \gamma_0<1.
\end{gather}

Thus, we can apply the Banach fixed-point theorem to the
operator~$\widehat A(C_0;\varepsilon)$ and conclude that for any
$\varepsilon \in (0,\varepsilon_0]$ the solution
$(z^1(\xi;\varepsilon), \dots, z^m(\xi;\varepsilon))=:
\varphi(\xi;\varepsilon)$ of the
problem~\eqref{dez1}--\eqref{iczi} (which is equivalent to
Eq.~\eqref{ie+}) belongs to~$O(\vartheta,C_0;\varepsilon)$. We emphasize
that the existence and the global uniqueness (i.e., uniqueness on
the set $[0,X/\varepsilon] \times \RR^m$) of the solution
$\varphi(\xi;\varepsilon)$ (for all $\varepsilon \in \RR$) are
immediately implied by the linearity of the
problem~\eqref{dez1}--\eqref{iczi} (the linearity of
Eq.~\eqref{ie+}).

The contractive property of the operator $\widehat
A(C_0;\varepsilon)$ also allows one to construct the iterative sequence
$\varphi_n(\xi;\varepsilon)=(z^1_n(\xi;\varepsilon), \dots,
z^m_n(\xi;\varepsilon))$ converging with respect to the norm of
the space~$C_m[0,X/\varepsilon]$ to the exact solution
$\varphi(\xi;\varepsilon)$ of the
problem~\eqref{dez1}--\eqref{iczi} uniformly with respect
to~$\varepsilon \in (0,\varepsilon_0]$:
\begin{equation*}
 \big\| \varphi-\varphi_n \big\|_{C_m[0,X/\varepsilon]}
 :=\max_{\xi \in X(\varepsilon)} \max_{1 \leq i \leq m}
 \big|z^i(\xi;\varepsilon)-z^i_n(\xi;\varepsilon)\big|
 \rightarrow 0, \quad n \to \infty.
\end{equation*}

We set $\varphi_0(\xi;\varepsilon) \equiv (0, \dots, 0) =: \vartheta$. Since
$\varphi(\xi;\varepsilon) \in O(\vartheta,C_0;\varepsilon)$, we have
\begin{gather}\label{vph0}
 \big\| \varphi(\xi;\varepsilon)
 -\varphi_0(\xi;\varepsilon) \big\|_{C_m[0,X/\varepsilon]}
 =\big\| \varphi(\xi;\varepsilon) \big\|_{C_m[0,X/\varepsilon]}
 \leq C_0
\end{gather}
for all $\varepsilon \in (0,\varepsilon_0]$.

Further, for any natural $n$ we set
\begin{gather}\label{vph_n}
 \varphi_n(\xi;\varepsilon)
 :=\widehat A(C_0;\varepsilon)
 [\varphi_{n-1}](\xi;\varepsilon).
\end{gather}
Then, taking into account \eqref{kCe} and~\eqref{vph0}, we have
for each $n \in \{0\} \cup \NN=: \NN_0$ and each $\varepsilon \in
(0,\varepsilon_0]$
\begin{multline}\label{est vph_n}
 \big\| \varphi(\xi;\varepsilon)
 -\varphi_n(\xi;\varepsilon) \big\|_{C_m[0,X/\varepsilon]}
 \\
 \leq {k(C_0;\varepsilon)}^n
 \big\| \varphi(\xi;\varepsilon)
 -\varphi_0(\xi;\varepsilon) \big\|_{C_m[0,X/\varepsilon]}
 \leq C_0 \left(\gamma_0 \frac{\varepsilon}{\varepsilon_0}\right)^n.
\end{multline}

We turn to the problem~\eqref{de}--\eqref{ic}. Due to
\eqref{subs}, we obtain the iterative sequences
$y^1_n(x;\varepsilon)$, \dots, $y^m_n(x;\varepsilon)$,
respectively, for the solution $y(x;\varepsilon)$ of the original
problem and its derivatives $\frac{d}{dx} y(x;\varepsilon)$, \dots,
$\frac{d^{m-1}}{{dx}^{m-1}} y(x;\varepsilon)$:
\begin{gather}\label{y^1_n}
 y^1_n(x;\varepsilon)
 :=\tilde y\left(\frac{x}{\varepsilon},x\right)
 +\varepsilon z^1_n\left(\frac{x}{\varepsilon}; \varepsilon\right),
 \quad n \in \NN_0;
 \\
 \label{y^i_n}
 y^i_n(x;\varepsilon) :=\varepsilon^{1-i}
 \Pi^{(i-1)} \left(\frac{x}{\varepsilon}\right)
 +\varepsilon^{2-i}
 z^i_n\left(\frac{x}{\varepsilon}; \varepsilon\right),
 \quad (i,n) \in \overline{2,m} \times \NN_0.
\end{gather}

For $n \geq 1$, the values $y^i_n(x;\varepsilon)$ can be
immediately expressed through $y^i_{n-1}(x;\varepsilon)$:
\begin{align*}
 y^1_n(x;\varepsilon) &=\tilde y\left(\frac{x}{\varepsilon},x\right)
 +\varepsilon \widehat A_1(C_0;\varepsilon)
 [z^1_{n-1}, \dots, z^m_{n-1}]
 \left(\frac{x}{\varepsilon};\varepsilon\right)
 \\
 &=: \widehat B_1(\varepsilon)[y^1_{n-1}, \dots, y^m_{n-1}]
 (x;\varepsilon),
 \\
 y^i_n(x;\varepsilon) &=\varepsilon^{1-i}
 \Pi^{(i-1)}\left(\frac{x}{\varepsilon}\right)
 +\varepsilon^{2-i} \widehat A_i(C_0;\varepsilon)
 [z^1_{n-1}, \dots, z^m_{n-1}]
 \left(\frac{x}{\varepsilon};\varepsilon\right)
 \\
 &=: \widehat B_i(\varepsilon)[y^1_{n-1}, \dots,y^m_{n-1}]
 (x;\varepsilon), \quad i \in \overline{2,m},
\end{align*}
where
\begin{gather*}
 z^1_{n-1}(\xi;\varepsilon)=\varepsilon^{-1}
 \Big[y^1_{n-1}(\varepsilon \xi; \varepsilon)
 -\tilde y(\xi, \varepsilon \xi)\Big],
 \\
 z^i_{n-1}(\xi;\varepsilon)=\varepsilon^{i-2}
 y^i_{n-1}(\varepsilon \xi; \varepsilon)
 -\varepsilon^{-1} \Pi^{(i-1)}(\xi),
 \quad i \in \overline{2,m}
\end{gather*}
(see~\eqref{y^1_n}, \eqref{y^i_n}, and~\eqref{vph_n}) or briefly
\begin{equation*}
 \psi_n(x;\varepsilon) :=\widehat B(\varepsilon)
 [\psi_{n-1}](x;\varepsilon),
\end{equation*}
where $\psi_n(x;\varepsilon) :=(y^1_n(x;\varepsilon), \dots,
y^m_n(x;\varepsilon))$ and $\widehat B(\varepsilon) :=(\widehat
B_1(\varepsilon), \dots \widehat B_m(\varepsilon))$. Note that
the operator $\widehat B(\varepsilon)$ is contractive for
$\varepsilon \in (0,\varepsilon_0]$ (i.e., for the same
$\varepsilon$ as $\widehat A(C_0;\varepsilon)$) and the operator
$\widehat B(\varepsilon)$ satisfies the condition
\begin{equation*}
 \widehat B(\varepsilon) : O(\tilde\psi, C_0; \varepsilon)
 \to O(\tilde\psi, C_0; \varepsilon)
\end{equation*}
for $\varepsilon \in (0,\varepsilon_0]$, where
\begin{gather*}
 O(\tilde\psi, C_0; \varepsilon) := \bigg\{ (y^1, \dots, y^m) \in C_m[0,X] : \forall x \in [0,X]
 \\
 y^1(x) \in \left[\tilde y\left(\frac{x}{\varepsilon},x\right) - \varepsilon C_0, \tilde y\left(\frac{x}{\varepsilon},x\right) + \varepsilon C_0\right],
 \\
 y^2(x) \in \left[\varepsilon^{-1} \Pi'\left(\frac{x}{\varepsilon}\right) - C_0, \varepsilon^{-1} \Pi'\left(\frac{x}{\varepsilon}\right) + C_0\right], \dots,
 \\
 y^m(x) \in \left[ \varepsilon^{1-m} \Pi^{(m-1)}\left(\frac{x}{\varepsilon}\right) - \varepsilon^{2-m} C_0, \varepsilon^{1-m} \Pi^{(m-1)}\left(\frac{x}{\varepsilon}\right) + \varepsilon^{2-m} C_0 \right] \bigg\}
\end{gather*}
is a closed $(\varepsilon C_0, C_0, \dots, \varepsilon^{2-m}
C_0)$-neighborhood of the vector-function
\begin{equation*}
 \tilde\psi\left(\frac{x}{\varepsilon};\varepsilon\right)
 :=\left(\tilde y\left(\frac{x}{\varepsilon},x\right),
 \varepsilon^{-1} \Pi'\left(\frac{x}{\varepsilon}\right), \dots,
 \varepsilon^{1-m} \Pi^{(m-1)}\left(\frac{x}{\varepsilon}\right)
 \right)
\end{equation*}
in the space $C_m[0,X]$.

We estimate the accuracy of the approximation of
$\frac{d^{i-1}}{{dx}^{i-1}} y(x;\varepsilon)$ by
$y^i_n(x;\varepsilon)$. For each $n \in \NN_0$ and $\varepsilon
\in (0,\varepsilon_0]$ we have (see~\eqref{y^1_n}, \eqref{y^i_n},
\eqref{subs}, and~\eqref{est vph_n}):
\begin{gather*}
 \big\| y(x;\varepsilon)-y^1_n(x;\varepsilon) \big\|
 =\left\| y(x;\varepsilon)
 -\tilde y\left(\frac{x}{\varepsilon},x\right)
 -\varepsilon z^1_n\left(\frac{x}{\varepsilon};
 \varepsilon\right) \right\|
 \\
 =\varepsilon \left\| z^1\left(\frac{x}{\varepsilon};
 \varepsilon\right)-z^1_n\left(\frac{x}{\varepsilon};
 \varepsilon\right) \right\|
 \leq \varepsilon
 \left\| \varphi\left(\frac{x}{\varepsilon};\varepsilon\right)
 -\varphi_n\left(\frac{x}{\varepsilon};\varepsilon\right)
 \right\|_{C_m[0,X]}
 \leq C_0 \varepsilon \left(\gamma_0
 \frac{\varepsilon}{\varepsilon_0}\right)^n,
 \\
 \left\| \frac{d^{i-1}}{dx^{i-1}} y(x;\varepsilon)
 -y^i_n(x;\varepsilon) \right\|
 =\left\| \frac{d^{i-1}}{dx^{i-1}} y(x;\varepsilon)
 -\varepsilon^{1-i} \Pi^{(i-1)}\left(\frac{x}{\varepsilon}\right)
 -\varepsilon^{2-i} z^i_n\left(\frac{x}{\varepsilon};
 \varepsilon\right) \right\|
 \\
 =\varepsilon^{2-i} \left\| z^i\left(\frac{x}{\varepsilon};
 \varepsilon\right)-z^i_n\left(\frac{x}{\varepsilon};
 \varepsilon\right) \right\|
 \leq \varepsilon^{2-i} \left\|
 \varphi\left(\frac{x}{\varepsilon};\varepsilon\right)
 -\varphi_n\left(\frac{x}{\varepsilon};\varepsilon\right)
 \right\|_{C_m[0,X]}
 \\
 \leq C_0 \varepsilon^{2-i}
 \left(\gamma_0 \frac{\varepsilon}{\varepsilon_0}\right)^n,
 \quad i \in \overline{2,m}.
\end{gather*}

\bibliographystyle{utphys}

\bibliography{Singul}

\end{document}